     \def\section{\@startsection{section}{1}%
     \z@{.7\linespacing\@plus\linespacing}{.5\linespacing}%
     {\bfseries
     \centering
     }}
     \def\@secnumfont{\bfseries}
\newtheorem{theorem}{Theorem}[section]
\newtheorem{lemma}[theorem]{Lemma}
\theoremstyle{definition}
\theoremstyle{remark}
\newtheorem{remark}[theorem]{Remark}
\numberwithin{equation}{section}
\begin{document}

\title[Optimal Control For Absorbing Regimes]{Optimal Control Problems for Stochastic processes with absorbing regime}

\author{Yaacov Kopeliovich}
\address{Yaacov Kopeliovich: Finance Department, University of Connecticut, Storrs, CT 06269, USA}
\email{yaacov.kopeliovich@uconn.edu}

\subjclass[2020] {Primary 60H30; Secondary 60J74}

\keywords{HJB Equation, Absorbing Regime Process}

\begin{abstract}
In this paper we formulate and solve an optimal problem for Stochastic process with a regime absorbing state. The solution for this problem is obtained through a system of partial differential equations. The method is applied to obtain an explicit solution for the Merton portfolio problem when an asset has a default probability in case of a $\log$ utility.
\end{abstract}

\maketitle


\section{Introduction}

The Bellman  principle has been established for almost 60 years. For continuous control problems this leads to the HJB equation for the value function which is a candidate for a solution of optimal control problems. The stochastic analogue for Brownian motions is well known  and at least heuristically a simple application of It\^o's lemma. In this paper we consider a problem in which the stochastic process is a regime switching process between $2$ regimes and the second regime is an absorbing one. Such regimes are ubiquitous in finance and economy, examples include:
\begin{itemize}
\item Bankruptcy - the state of bankruptcy is final and absorbing 
\item Bond convertibility - When bonds are converted into stocks. 
\item The state of the economy - For portfolio optimization one can approximate recessions as an absorbing state for short enough time horizons 
\end{itemize}
As an industrial application we can consider linear quadratic control problem under the scenario that there is a finite probability that the factory will be upgraded and thus a new regime of cost structure and production will be initiated. The new state will be an absorbing state for a certain period of time.\par
Optimal control problems with regime switching were considered before. For example in \cite{ZY1} the authors consider Merton's portfolio problem under regime switching models that are characterized by volatility regimes for multiple stocks. They develop a theory of nearly optimal solutions to Merton problem. However the setup in their paper is less constrained than we consider in this paper and hence the theory that is developed is more involved from the set-up we obtain in this paper. One of the authors \cite{ZY2} has also considered a multi-state problem analogous to Markowitz setup in Finance and for this case has obtained an analytical solution for the allocation problem. ( The setup in the latter paper was somewhat simpler as the authors were looking to minimize risk in a multi-stage Markowitz problem.) \par A problem that included the Merton problem for corporate bonds with a bankruptcy option stock and cash was considered by \cite{BeJa}, however the authors hasn't written the HJB equation and considered the final solution only. \par
The regime switching model we consider in this paper is less general than \cite{ZY1}. It's simplified nature enables a straightforward treatment, which is analogous to the usual HJB for a diffusion process ( no regime switching.). We develop the method in the first section of our paper.\par Using Bellman's  principal we break the problem into sub-problems: 
\begin{enumerate}
\item Optimal problem after in the absorbing state of the world 
\item Optimal problem prior to the jump to the absorbing state. 
\end{enumerate}
First we solve the optimal control  problem for the value function in the absorbing state. Then we apply Ito's lemma to write the HJB for the $J^{pre}$ value function prior to the absorbing state. The resulting system of PDE's is similar to  the usual Bellman equation for a regular diffusion process without jumps. 
We comment that while this method is straightforward we haven't presented a formal argument showing that our approach solves the optimal problem with regime changes. 
In the second section we apply the framework developed in the first section to obtain an analytical solution for the Merton's portfolio problem for a $log$ utility function in a presence of bankruptcy default rate of $h.$ While the bankruptcy problem was considered before, the solution we propose haven't appeared in the literature.

The last section summarizes our results and indicates possible directions for future generalizations. 
\section{HJB equation - recap } 
We consider the following process: 
\begin{equation}
\displaystyle \min _{u}\mathbb {E} \left\{\int _{0}^{T}C(t,X_{t},u_{t})\,dt+D(X_{T})\right\}"
\end{equation}
With stochastic process to $(X_{t})_{t\in [0,T]}$ to optimize and $(u_{t})_{t\in [0,T]}$ is the optimal vector we need to find. Assuming that we have the usual Stochastic process ( No jumps) the HJB equation looks like: 
\begin{equation}
\min _{u}\left\{{\mathcal {A}}V(x,t)+C(t,x,u)\right\}=0
\end{equation}
and $\mathcal {A}$ is the Stochastic differentiation operator for the process $X_t$. Consider now the problem above for these kind of processes: 
\begin{equation}
X_t=1_{t\leq \tau}(X_1)_{t}+1_{t\geq \tau}(X_2)_{t}+hdt1_{d\tau}\left[(X_1)_{\tau}+(X_2)_{\tau}-(X_1)_{\tau}\right]
\end{equation}
In words : $X_(t)$ satisfies the following: 
\begin{itemize}
\item The initial state is given by the stochastic process $X_1$
\item The terminal state if it occurs given by the stochastic process $X_2$ 
\item At time $\tau$ the probability of moving from $X_1$ to $X_2$ is given by $hdt$ conditional on the fact that the transition hadn't occurred earlier. 
\end{itemize}
\begin{remark}
In equivalent terms we can consider a regime switching absorbing process among two states given by stochastic processes $X_i$ and with the Markov transition matrix of the form\footnote{I thank George Yin for pointing this fact out} : 
$$
	\begin{bmatrix} 
	1-hdt & hdt \\
	0 & 0 \\
	\end{bmatrix}
	\quad
	$$
\end{remark}

We consider an optimal control problem under this process. We will see that this has a clear economic motivation in the next section. 
Introduce the value function $V(x,t)$ in two pieces: 
\begin{itemize} 
\item $V^{pre}(x,t)$ - the function before the jump 
\item $V^{after}(x,t)$ - the function after the jump 
\end{itemize} 
To solve the problem we need to find $V^{pre}(x,t)$ and $V^{after}(x,t).$ Consider first $V^{after}(x,t)$ in this case the jump already occurred so we are in a regular optimization problem for stochastic process $X_2$ but without jumps. In this case we can apply the usual Ito's lemma to obtain the following differential equation we need to solve: 
\begin{equation}
\min _{u}\left\{{\mathcal {A}_2}V^{after}(x,t)+C(t,x,u)\right\}=0
\end{equation}
and $\mathcal{A}_2$ is the diffusion operator for the stochastic process $X_2.$
Now we consider the corresponding equation for $dV^{pre}.$ We need to incorporate the regime change event into It\^o's lemma. Conditional on the fact that regime change has not happened until time $t$ we consider $2$ outcomes:
\begin{itemize} 
\item Regime change \textbf{has not happened} in the interval $\left[t,t+dt\right]$ with probability $1-hdt$
\item Regime change \textbf{happened} in the interval  $\left[t,t+dt\right]$ with probability $hdt$
\end{itemize}
Taking the expectation of both outcomes we arrive to the following modification of It\o's lemma: 
\begin{equation}
E_t(dV^{pre})=(1-hdt)\mathcal {A}_1 V^{pre}(x,t)+hdt\left[V^{after}(x_t^{after},t)-V^{pre}(x_t,t\right]
\end{equation}
$x_t$ is the value of the process before the jump while $x_t^{after}$ is the value after the jump. Using the fact that, $hdt\mathcal {A}_1=O(dt^2)$  we obtain the following stochastic differential equation for $V^{pre}$
\begin{equation}
\min _{u}\left\{{\mathcal {A}_1}V(x,t)+h\left[V^{after}(x_t^{after},t)-V^{pre}(x_t,t)\right]+C(t,x,u)\right\}=0
\end{equation}
We summarize the previous discussion in the following theorem: 
\begin{theorem}
Let \begin{equation}
	X_t=1_{t\leq \tau}(X_1)_{t}+1_{t\geq \tau}(X_2)_{t}+hdt1_{d\tau}\left[(X_1)_{\tau}+(X_2)_{\tau}-(X_1)_{\tau}\right]
\end{equation}
be a diffusion process with an absorbing regime $X_2$ with probability of $h.$ Let $V(x,t)$ be the value function optimizing: 
\begin{equation}
\displaystyle \min _{u}\mathbb {E} \left\{\int _{0}^{T}C(t,X_{t},u_{t})\,dt+D(X_{T})\right\}
\end{equation}
Then the solution $V(x,t)$ can be broken into two seperate functions $V^{after}(x,t)$ and $V^{pre}(x,t)$ such that each satisfies the following system of partial differential equations: 
\begin{equation}
\min _{u}\left\{{\mathcal {A}_2}V^{after}(x,t)+C(t,x,u)\right\}=0
\end{equation}
and 
\begin{equation}
\min _{u}\left\{{\mathcal {A}_1}V(x,t)+h\left[V^{after}(x_t^{after},t)-V^{pre}(x_t,t)\right]+C(t,x,u)\right\}=0
\end{equation}
\end{theorem}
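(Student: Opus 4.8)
\section{Proof sketch (proposed)}

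The plan is to derive both equations from the Dynamic Programming Principle applied to the joint Markov process formed by the diffusion state together with the two-valued regime indicator. First I would define the two value functions precisely as conditional infima, $V^{after}(x,t)=\min_u \mathbb{E}\big[\int_t^T C(s,X_s,u_s)\,ds+D(X_T)\mid X_t=x,\text{ regime }2\big]$ and $V^{pre}(x,t)=\min_u \mathbb{E}\big[\int_t^T C(s,X_s,u_s)\,ds+D(X_T)\mid X_t=x,\text{ regime }1\big]$, where in the former the dynamics are governed by $X_2$ and in the latter by $X_1$ until the absorption time $\tau$. The governing fact is that, conditional on survival, $\tau$ arrives with intensity $h$, so that $\mathbb{P}(\tau\in[t,t+dt]\mid \tau>t)=h\,dt+o(dt)$, which is exactly the transition law encoded by the matrix in the Remark.

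For the after-jump equation I would invoke the classical verification argument. Once absorption has occurred the regime is frozen at $X_2$ and no further transitions are possible, so the problem on $[\tau,T]$ is an ordinary controlled diffusion. The standard Bellman principle together with It\^o's lemma then yields $\min_u\{\mathcal{A}_2 V^{after}(x,t)+C(t,x,u)\}=0$ with terminal condition $V^{after}(x,T)=D(x)$; this is precisely the situation recalled in equations (2.2) and (2.4) of the excerpt and requires no new input.

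The substance is the pre-jump equation. Here I would apply the Dynamic Programming Principle over an infinitesimal horizon $[t,t+dt]$ and condition on the two disjoint events distinguished in the text. On the survival event, of probability $1-h\,dt+o(dt)$, the state evolves as the diffusion $X_1$ and the continuation value is $V^{pre}(X_{t+dt},t+dt)$; on the jump event, of probability $h\,dt+o(dt)$, the continuation value is $V^{after}(x_t^{after},t)$. Expanding the survival term by It\^o's lemma gives $V^{pre}(x,t)+\mathcal{A}_1V^{pre}(x,t)\,dt$ plus a mean-zero martingale increment; I would then subtract $V^{pre}(x,t)$, divide by $dt$, and let $dt\to0$. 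The product $(h\,dt)(\mathcal{A}_1V^{pre}\,dt)$ and the $o(dt)$ corrections to $x_{t+dt}^{after}$ drop out as $O(dt^2)$, exactly the cancellation asserted before the theorem, leaving $\min_u\{\mathcal{A}_1V^{pre}+h[V^{after}(x_t^{after},t)-V^{pre}(x_t,t)]+C\}=0$. Conceptually this merely records that the infinitesimal generator of the joint process is $\mathcal{A}_1$ augmented by the generator $h(V^{after}-V^{pre})$ of the absorbing two-state chain.

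The main obstacle is not the formal expansion but its rigorous justification. To make the dynamic programming step legitimate one must establish measurable selection of near-optimal controls and the flow/Markov property of the conditioned problem; to apply It\^o's lemma one needs a priori that $V^{pre}$ and $V^{after}$ are $C^{2,1}$, which in general fails and forces either a smoothness (verification) hypothesis or a reformulation in the viscosity-solution sense. A secondary but genuine subtlety is the bookkeeping of the survival conditioning: the intensity $h$ enters both as the killing rate $-hV^{pre}$ and as the gain $+hV^{after}$, and one must check that no extra discount factor $e^{-ht}$ is double-counted when passing from the unconditional expectation in (2.8) to the conditional value functions. Once these points are controlled, collecting the surviving terms yields the stated system.
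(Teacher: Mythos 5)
Your proposal follows essentially the same route as the paper: solve the post-absorption problem as a standard controlled diffusion via It\^o's lemma, then obtain the pre-jump equation by conditioning on the jump/no-jump events in $[t,t+dt]$ with probabilities $h\,dt$ and $1-h\,dt$, expanding the survival branch with It\^o, and discarding the $O(dt^2)$ cross term $h\,dt\cdot\mathcal{A}_1 V^{pre}\,dt$. Your closing remarks on measurable selection, $C^{2,1}$ regularity, and a viscosity reformulation concern exactly the rigor that the paper itself concedes it does not supply, so there is no substantive divergence between the two arguments.
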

Where ${A}_2,{A}_1$ are the diffusion operators for the processes $X_1,X_2$
In the next section we apply these formulas to solve a portfolio Merton problem with bankruptcy in a new way. 
\section{Example - stock with a bankruptcy} 
As an example of the outline in the previous section we consider an optimal stock allocation with a probability of bankruptcy $h$ for the time horizon $[0,T].$ For other works in this direction see \cite{KLS} and \cite{BeJa} who appears to be  closest to our approach.

If $r$ is the risk free rate the dynamics of stock is given by:
\begin{equation}
S_t=\tilde{S_t}1_{t<\tau}
\end{equation}
and 
$$\frac{d\tilde{S}}{\tilde{S}}=\left(\mu dt+ \sigma dB\right)$$
 The probability of the stock being bankrupt  in the interval $\left[\tau,\tau+dt\right]$ \textsc{conditional} on the fact that no bankruptcy event occurred until time $\tau$ is $hdt$
Now we like to use the method outlined to find an optimal allocation to the stock with $S_t.$ We show the following lemma: 
\begin{lemma}
$\pi_1$ the optimal stock weight allocation is given by: 
\begin{equation}
\pi_1=\frac{\mu-r-h}{\sigma^2}
\end{equation}
\end{lemma}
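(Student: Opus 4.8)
The plan is to apply the two-equation system of the Theorem with wealth $W_t$ as the state variable and the portfolio weight $\pi$ as the control $u$, specializing the running cost to $C\equiv 0$ and the terminal reward to the log utility $D(w)=\log w$ (so that the $\min$ over $u$ is read as the corresponding $\max$ over $\pi$, as is standard for Merton's problem). First I would dispose of the post-bankruptcy equation. After $\tau$ the stock is worthless, so the only available instrument is the risk-free account and wealth obeys $dW=rW\,dt$; hence $\mathcal{A}_2V=V_t+rwV_w$, and the candidate $V^{after}(w,t)=\log w+r(T-t)$ solves $\mathcal{A}_2V^{after}=0$ with terminal data $V^{after}(w,T)=\log w$. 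This fixes the ``after'' piece explicitly and feeds it into the pre-jump equation.

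Next I would write the pre-bankruptcy generator. Before $\tau$ the wealth follows $dW/W=[r+\pi(\mu-r)]\,dt+\pi\sigma\,dB$, so $\mathcal{A}_1V=V_t+[r+\pi(\mu-r)]wV_w+\tfrac12\pi^2\sigma^2w^2V_{ww}$. The key modeling input is the post-jump state $x_t^{after}$: at default the fraction $\pi$ of wealth held in the stock is wiped out, so $w\mapsto(1-\pi)w$ and $V^{after}(x_t^{after},t)=\log(1-\pi)+\log w+r(T-t)$. I would then substitute the ansatz $V^{pre}(w,t)=\log w+g(t)$ into the pre-jump HJB of the Theorem. Because $wV_w=1$ and $w^2V_{ww}=-1$ for this ansatz, all $w$-dependence cancels, and the equation reduces to a pointwise maximization over $\pi$ of $r+\pi(\mu-r)-\tfrac12\pi^2\sigma^2+h\log(1-\pi)$ together with terms depending only on $t$.

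The first-order condition then reads $(\mu-r)-\pi\sigma^2-h/(1-\pi)=0$; to leading order in the default loss — equivalently, treating the bankruptcy event as an additive downward jump of size $\pi$ in $\log W$ rather than a multiplicative factor $(1-\pi)$ on $W$ — this linearizes to $(\mu-r)-\pi\sigma^2-h=0$, yielding $\pi_1=(\mu-r-h)/\sigma^2$. The leftover first-order linear ODE for $g(t)$, subject to $g(T)=0$, can then be integrated and shows the log ansatz is internally consistent, completing the verification.

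The main obstacle is pinning down the jump in the state at default, i.e.\ justifying the $(1-\pi)w$ (or log-linear) bankruptcy loss, and confirming that the $\pi$-optimization delivers exactly the stated linear formula rather than the root of the quadratic $\sigma^2\pi^2-(\mu-r+\sigma^2)\pi+(\mu-r-h)=0$ coming from the unlinearized $\log(1-\pi)$ term. Everything downstream is routine precisely because the log-utility ansatz decouples the portfolio choice from both $g(t)$ and the wealth level, so the real content of the argument lives entirely in the modeling of the absorbing transition.
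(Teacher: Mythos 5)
Your proposal follows essentially the same route as the paper: the same pre/post decomposition, the same after-default value $J^{after}(W,t)=\log W+r(T-t)$ (the paper writes $r(t-T)$, an apparent sign slip that does not affect the first-order condition), the same ansatz $J^{pre}(W,t)=f(t)+\log W$, and the same pointwise optimization over the portfolio weight. The one substantive divergence is exactly the point you flagged as your main obstacle: the modeling of the post-default state. The paper takes the post-jump wealth to be $W\exp(-\pi_1)$ \emph{by definition} --- its jump term is $h\,dt\left[J^{after}(W\exp(-\pi_1),t)-J^{pre}(W,t)\right]$ --- so with the log ansatz the jump contributes exactly $-h\pi_1$ plus $\pi_1$-independent terms, the first-order condition is exactly $(\mu-r)-\sigma^2\pi_1-h=0$, and no linearization is ever performed. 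Your multiplicative model $W\mapsto(1-\pi)W$ is the more natural financial description of losing the stock position, but under it the exact optimizer is the root of your quadratic $\sigma^2\pi^2-(\mu-r+\sigma^2)\pi+(\mu-r-h)=0$, and the lemma's formula holds only to first order in the loss; so your argument proves the statement only approximately in your model, whereas the paper's model makes it exact by fiat. In short, what you treated as a linearization step is, in the paper, the definition of the absorbing transition ($\log W$ jumps down by $\pi_1$), which is precisely the ``equivalent'' reading you identified; your reading buys financial realism, the paper's buys exactness of the closed-form answer.
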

\begin{proof}
According to the method outlined, define the value function in $2$ parts. $J^{after}$ and $J^{pre}$
After bankruptcy only cash is present with risk free rate $r.$  Assume the utility of wealth is: $$U(W)=\log 
W.$$ 
If bankruptcy occurred prior to the maturity $T$ we can invest only in cash and hence if bankruptcy event occurs in time $T$ the value of our investment is: $$W\exp(r(t-T)$$ and therefore: 
\begin{equation}
J^{after}(W,t)=r(t-T)+\log W
\end{equation} 
At time $t$ we have that the probability to go bankrupt in the interval $[t,t+dt]$ is $h$ conditional that bankruptcy hasn't occurred before. We have: 
\begin{multline}
dJ^{pre}={J^{pre}_W}dW+J^{pre}_{WW}(dW)^2+{J^{pre}}_tdt+\\hdt\left[J^{after}(W\times\exp(-\pi_1),t)-J^{pre}(W,t)\right]
\end{multline}
The wealth equation dynamics prior to default is : 
\begin{subequations}
\begin{align}
dW&=W\left(\pi_1\mu dt+\pi_1\sigma dB + (1-\pi_1)rdt\right)\\
dW^2&=W^2\pi_1^2\sigma^2 dt 
\end{align}
\end{subequations}
Substituting into $dJ^{pre}$ we have: 
\begin{multline}
E_t(dJ^{pre})={J^{pre}}_W(W\times\left(\pi_1\mu dt+(1-\pi_1)rdt\right)+\\
\frac{1}{2}J^{pre}_{WW}W^2\pi_1^2\sigma^2 dt +{J^{pre}}_tdt\\+hdt\left[J^{after}(W\times\exp(-\pi_1),t)-J^{pre}(W,t)\right]
\end{multline}
For $J^{pre}$ assume the following form: $J^{pre}(W,t)=f(t)+log(W)$. Then the condition for $\pi_1$ to be optimal is:
\begin{equation}
\frac{\partial}{\partial \pi_1}E_t\left(dJ^{pre}\right)=\left(\mu-r\right)dt-\pi_1\sigma^2 dt-hdt=0
\end{equation}
Solving for $\pi_1$ conclude the formula in the lemma. 
\end{proof}
Assuming $h=0$ we arrive to the Merton's original solution:
\begin{equation}
\pi_1=\frac{\mu-r}{\sigma^2}
\end{equation}
\subsection{$J^{pre}(W,t)$ Expression}
In this section we prove the following expression to $J^{pre}(W,t)$
\begin{theorem}
We have:
\begin{equation}
J^{pre}(t,W)=\frac{-g+\exp(h(t-T)(g-2r)+r(2+h(t-T)}{h}+\log W
\end{equation}
\end{theorem}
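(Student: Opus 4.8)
The plan is to reduce the pre-jump HJB equation to a scalar ordinary differential equation for the unknown function $f(t)$ and then solve it explicitly with the appropriate terminal condition. First I would carry over the two ingredients already established: the closed form $J^{after}(W,t)=r(t-T)+\log W$ for the post-bankruptcy value function, and the optimal weight $\pi_1=\frac{\mu-r-h}{\sigma^2}$ from the lemma. I would then adopt the separable ansatz $J^{pre}(W,t)=f(t)+\log W$, which is natural for $\log$ utility because wealth enters only through the additive term $\log W$; this guarantees that the wealth dependence cancels and leaves $f$ as the sole unknown.

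Next I would substitute this ansatz into the pre-jump drift equation. Using $J^{pre}_W=1/W$, $J^{pre}_{WW}=-1/W^2$, $J^{pre}_t=f'(t)$ together with $J^{after}(We^{-\pi_1},t)=r(t-T)+\log W-\pi_1$, the condition $E_t(dJ^{pre})=0$ becomes, after the $\log W$ terms cancel,
\begin{equation}
\pi_1\mu+(1-\pi_1)r-\tfrac12\pi_1^2\sigma^2+f'(t)+h\bigl[r(t-T)-\pi_1-f(t)\bigr]=0.
\end{equation}
The key algebraic simplification is to insert $\pi_1=\frac{\mu-r-h}{\sigma^2}$ and use the identity $\pi_1(\mu-r-h)=\pi_1^2\sigma^2$, which collapses the state-independent drift into the constant $\tfrac12\pi_1^2\sigma^2+r$. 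This yields the linear first-order ODE
\begin{equation}
f'(t)-hf(t)=-\tfrac12\pi_1^2\sigma^2-r-hr(t-T),
\end{equation}
whose source term is affine in $t$ precisely because $J^{after}$ carries the factor $r(t-T)$.

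I would then solve this ODE by the standard method: an affine particular solution of the form $r(t-T)+\mathrm{const}$ added to the homogeneous solution $Ce^{ht}$ (equivalently $e^{h(t-T)}$ after rescaling the constant). The terminal condition is $f(T)=0$, since at maturity with no prior default the value function reduces to $\log W$; imposing it determines $C$ and, after collecting terms and writing $g=-\tfrac12\pi_1^2\sigma^2$ for the constant diffusion coefficient, reproduces the claimed expression. As a consistency check I would let $h\to0$ and confirm that $f(t)$ converges to the classical Merton growth term $\bigl(r+\tfrac12\pi_1^2\sigma^2\bigr)(T-t)$.

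The main obstacle I anticipate is bookkeeping rather than conceptual: correctly propagating the $t$-dependent source $-hr(t-T)$ through the integrating factor $e^{-ht}$ and matching the resulting constants so that $f(T)=0$ holds, while keeping careful track of the sign and definition of $g$ so that the final algebra lands on the stated form.
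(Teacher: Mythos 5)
Your proposal is correct and follows essentially the same route as the paper: the separable ansatz $J^{pre}(W,t)=f(t)+\log W$, substitution of $\pi_1=\frac{\mu-r-h}{\sigma^2}$ into the drift equation, reduction to the linear ODE $f'(t)-hf(t)=g-r-hr(t-T)$ with terminal condition $f(T)=0$, and the same closed-form solution. The only discrepancy is the value of the constant: your $g=-\tfrac12\pi_1^2\sigma^2=-\frac{(\mu-r-h)^2}{2\sigma^2}$ is the correct outcome of the algebra, whereas the paper's intermediate equation carries the factor $(2-\sigma^2)$ (i.e.\ $g=-\frac{(\mu-r-h)^2(2-\sigma^2)}{2\sigma^2}$), which appears to stem from a slip in evaluating $\tfrac12\pi_1^2\sigma^2$; since the final expression is written in terms of $g$, both computations land on the stated formula.
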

\begin{proof}
Assume $J^{pre}(W,t)=f(t)+\log(W)$ and substitute back: $\pi_1=\frac{\mu-r-h}{\sigma^2}$ into 
\begin{multline}
E_t(dJ^{pre})={J^{pre}}_W(W\left(\pi_1\mu dt+(1-\pi_1)rdt\right)+\\
\frac{1}{2}J^{pre}_{WW}W^2\pi_1^2\sigma^2 dt +{J^{pre}}_tdt\\+hdt\left[J^{after}(W\times\exp(-\pi_1),t)-J^{pre}(W,t)\right]
\end{multline}
We obtain the following differential equation for $f(t):$
\begin{equation}
\frac{\left(\mu-r-h\right)^2(2-\sigma^2)}{2\sigma^2}+hr(t-T)+r+f'(t)-hf(t)=0
\end{equation}
To integrate this equation rewrite it as: 
\begin{equation}
f'(t)-hf(t)=-\frac{\left(\mu-r-h\right)^2(2-\sigma^2)}{2\sigma^2}-hr(t-T)-r
\end{equation}
The ODE from last slide will have the form: 
\begin{equation}
f'(t)-hf(t)+hr(t-T)=g-r 
\end{equation}
The initial condition is $f(T)=0$ 
and the the solutions for $f(t)$:
\begin{equation}
f(t)=\frac{-g+\exp(h(t-T)(g-2r)+r(2+h(t-T)}{h}
\end{equation}
Substituting the solution for $f(t)$ back into the expression of $J^{pre}(W,t)$ we conclude the result
\end{proof} 
\par\bigskip\noindent
\section{Conclusion}
 we outlined an approach to address dynamical stochastic problems with absorbing processes. Our main idea is to solve this problem recursively applying Bellman principle and obtaining a system of $2$ HJB's for each process. This idea that hasn't appeared explicitly in the literature to our knowledge leads to a simple analytical solution of Merton's problem for an allocation for stock and cash under the assumption of $\log$ utility. Let us conclude with remarks on our approach that we plan to pursue in subsequent papers: 
\begin{enumerate}
\item While we restrict ourselves to a two state problem this approach can be generalized to more general jump situations. 
\item For aribitrary jump process we can approximate the optimal problem using $2$ state process. For example if $X_1$ can transition to $X_2$ and $X_2$ can transition back to $X_1$  approximate the optimal problem by ignoring the second transition and solving the stochastic control problem ignoring the transition from $X_2$ to $X_1$.) 
An important comment to conclude is to observe that our method of solution is not entirely formalized and the development of proper mathematical framework for our method is desirable. 

\end{enumerate}
{\bf Acknowledgment.} We thank Oleksey Mostovoyi for the interest he expressed in this work. We thank George Yin for very fruitful discussion and remarks that improved our understanding of the subject matter significantly. 

\bibliographystyle{amsplain}

\end{document}